\documentclass[12pt]{article}
\usepackage{fullpage}
\usepackage{amsfonts, amsmath, amssymb,latexsym,epsfig}
\usepackage{amsthm}
\usepackage{tikz}
\usetikzlibrary{graphs}
\usetikzlibrary{graphs.standard}
\usepackage{relsize}
\usepackage{verbatim}
\usepackage{enumerate}
\usepackage[skip=10pt plus1pt, indent=20pt]{parskip}

\usepackage{hyperref}
\usepackage{cleveref}

\newtheorem{thm}{Theorem}[section]

\newtheorem{lem}[thm]{Lemma}

\newtheorem{prop}[thm]{Proposition}
\newtheorem{conj}[thm]{Conjecture}

\newcommand{\F}{\mathbb F}

\author{
Vladislav Taranchuk \thanks{Department of Mathematics: Analysis, Logic and Discrete Mathematics, Ghent University, 9000 Ghent, Belgium. E-mail: {\tt vlad.taranchuk@ugent.be}.}
}

\title{$K_{2, t+1}$-free graphs containing an optimal number of $K_{t, t}$'s}

\begin{document}

\maketitle

\begin{abstract}
The generalized Tur\'{a}n number $ex(n, K_{t, t}, K_{2, t+1})$ is the maximum number of copies of $K_{t, t}$ that a $K_{2, t+1}$-free graph on $n$ vertices can contain. Recently, Pohoata, Tidor, and Yu established that $ex(n, K_{t, t}, K_{2, t+1}) = \Theta_t(n^2)$ for all integers $t \geq 3$. In this short note,  we use an explicit construction to establish that when $t$ is a prime power and $n = t^{2e  - 1}$ then
$$
ex(n, K_{t, t}, K_{2, t+1}) = (1 + o(1))\frac{n^2}{2t(t-1)}.
$$

\end{abstract}

\section{Introduction}

The generalized Tur\'{a}n number $ex(n, H, F)$ is the maximum number of copies of $H$ that an $F$-free graph on $n$ vertices can contain. This natural generalization of the standard Tur\'{a}n was first introduced by Alon and Shikhelman \cite{AlonShikhelman2016ManyTCopies}. There are many papers which are devoted to studying $ex(n, H, F)$. We refer the reader to the nice survey of Gerbner and Palmer \cite{GerbnerPalmer2025SurveyGeneralizedTuran} for a detailed history and up to date results on $ex(n, H, F)$. 

The more specific problem studying $ex(n, K_{a, b}, K_{c, d})$ has also seen considerable attention, however tight results are only known in particular regimes (see \cite[Section 4.3]{GerbnerPalmer2025SurveyGeneralizedTuran}). Recently, Pohoata, Tidor and Yu \cite{PohoataTidorYu2026K2tFreeGraphs} established that 
$$
ex(n, K_{t, t}, K_{2, t+1}) = \Theta_t(n^2),
$$
answering a question of Spiro. Their proof relies on a mix of geometric and probabilistic methods, constructing their graph from subspace-evasive sets in a 3-dimensional vector space over a finite field.

In this short note, we give a simple and completely explicit construction of $K_{2, t+1}$-free graphs for an infinite, albeit sparse, set of $n$'s, for which the corresponding graph contains an optimal number of $K_{t, t}$'s.

\begin{thm}\label{T: Main}
    Let $t$ be a prime power, $e > 1$ an integer and $n = t^{2e -1}$. Then 
    $$
    ex(n, K_{t, t}, K_{2, t+1}) = (1 + o(1))\frac{n^{2}}{2t(t-1)}
    $$
\end{thm}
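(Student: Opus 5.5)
The upper bound is a pair-counting argument, and the lower bound needs an explicit graph in which almost every pair of vertices sits on one side of a $K_{t,t}$.

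\textbf{Upper bound.} Let $G$ be a $K_{2,t+1}$-free graph on $n$ vertices, so any two distinct vertices have at most $t$ common neighbours. Let $A\cup B$ be a copy of $K_{t,t}$ in $G$ and let $a\neq a'$ lie in $A$. Then $B\subseteq N(a)\cap N(a')$, and since $|B|=t$ we get $N(a)\cap N(a')=B$ exactly. For $t\ge 2$ the same argument applied to two vertices of $B$ gives $A=N(b)\cap N(b')$. Hence a single pair $\{a,a'\}$ lying on one side of the copy determines the whole copy.

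So each unordered pair of vertices lies on the same side of at most one copy of $K_{t,t}$. Each copy has $2\binom{t}{2}$ such same-side pairs, which gives
$$
\#K_{t,t}\cdot t(t-1)\le \binom n2 .
$$
Therefore $ex(n,K_{t,t},K_{2,t+1})\le (1+o(1))\frac{n^2}{2t(t-1)}$. This holds for every $n$; the prime-power hypothesis only matters for the construction.

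\textbf{Lower bound: the target.} With $q=t$ and $n=q^{2e-1}$, I want a $K_{2,t+1}$-free graph $G$ with two properties.
\begin{enumerate}[(i)]
\item For all but $o(n^2)$ pairs $x\neq y$, the set $N(x)\cap N(y)$ has exactly $t$ elements.
\item For such a pair, $B=N(x)\cap N(y)$ satisfies $N(b)\cap N(b')=A$ for all $b\ne b'$ in $B$, where $A$ is a fixed $t$-set containing $x$ and $y$.
\end{enumerate}
Then the copies of $K_{t,t}$ partition almost all pairs, and their number is $(1-o(1))\binom n2/(t(t-1))$, which matches the upper bound.

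\textbf{Lower bound: the candidate graph.} The natural candidate is an algebraic "Füredi-type" graph over $\F_q$. Take vertex set $V=\F_q^{2e-1}$, written as $x=(x_0,\mathbf{x})$ with $\mathbf x\in \F_q^{e-1}\times\F_q^{e-1}$ carrying a nondegenerate symplectic or bilinear form $\langle\cdot,\cdot\rangle$. Declare
$$
x\sim y \iff x_0+y_0=\langle \mathbf x,\mathbf y\rangle ,
$$
with loops discarded.
\begin{itemize}
\item For fixed $x\ne x'$, a common neighbour $y$ satisfies two affine equations in $y$. Generically these cut out a codimension-$2$ affine subspace, so I would instead choose the form so that the two equations force $\mathbf y$ into a line. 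Concretely, I would use a polynomial relation over $\F_{q^{2e-1}}$, such as a trace or norm condition, so that the common neighbourhood is an $\F_q$-line $\{y_1+\lambda v:\lambda\in\F_q\}$ of exactly $q=t$ points.
\item The $t$ points of that line should then share, as common neighbourhood, the "dual line" through $x$ and $x'$. Adjacency along these lines is complete, so this gives the required $K_{t,t}$.
\item Degenerate pairs, for example those where the linear system drops rank, should contribute only $O(n^{2-\epsilon})$ pairs. They still must be checked never to have more than $t$ common neighbours, so that $G$ is genuinely $K_{2,t+1}$-free.
\end{itemize}

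\textbf{Main obstacle.} The hard step is choosing the adjacency relation so that the common neighbourhood of two vertices is always of size at most $t$, equals $t$ for almost all pairs, and that these $t$-sets pair up into "lines" and "dual lines" forming bicliques. My first attempt would be to realise $V$ as $\F_{q^{2e-1}}$ and define $x\sim y$ by $\mathrm{Tr}(xy^{q^{e}})\in$ a fixed set, or $x+y^{q^e}\in$ an $\F_q$-subspace. The aim is for the Frobenius twist to make the linear system for common neighbours have rank exactly $2e-2$, so that its solution set is a single line over $\F_q$. Verifying this rank condition for all but $o(n^2)$ pairs, and the upper bound of $t$ in the degenerate cases, is where the real work lies.
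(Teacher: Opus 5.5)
\textbf{Verdict: genuine gap in the lower bound.} Your upper bound is correct and is exactly the paper's Proposition: a same-side pair determines the copy, and each copy has $2\binom{t}{2}$ same-side pairs.

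The lower bound, however, is not established. You never exhibit a graph with properties (i) and (ii), and the one concrete candidate you write down fails for every $e\ge 3$. With $V=\F_t^{2e-1}$ and the single scalar relation $x_0+y_0=\langle\mathbf x,\mathbf y\rangle$, the common neighbours of $x,x'$ are cut out by two affine equations in $2e-1$ unknowns. So for $\mathbf x\neq\mathbf x'$ and a nondegenerate form there are about $t^{2e-3}>t$ of them, and the graph is not $K_{2,t+1}$-free. No choice of scalar-valued form can repair this. (A symplectic form would also make the relation non-symmetric in odd characteristic.)

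Your fallback suggestions do not help either. The condition $\mathrm{Tr}(xy^{q^e})=c$ is again one $\F_t$-linear condition on $y$, giving neighbourhoods of size about $n/t$, and it is not symmetric in $x,y$. A nonlinear condition destroys the affine-line mechanism that (ii) relies on, and nothing is checked for it. For fixed $t$ the asymptotic statement needs $e\to\infty$, so the step you defer as ``where the real work lies'' is the whole lower bound.

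The missing idea is a dimension count. Adjacency should impose $e-1$ independent $\F_t$-linear conditions. Then degrees are about $t^e=\sqrt{tn}$, and two generic vertices face $2(e-1)$ conditions on $2e-1$ coordinates, leaving exactly an $\F_t$-line of $t$ common neighbours. In other words, keep your F\"uredi-type shape, but enlarge the \emph{value space} of the form to $\F_t^{e-1}$ and the domain only to $\F_t^{e}$. For $e=2$ your candidate, with a suitable nondegenerate symmetric form, is exactly the paper's graph.

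The paper realises this as $V=\F_{t^e}\times R_L$, where $L$ is an $\F_t$-linearized polynomial with one-dimensional kernel and image $R_L$ of size $t^{e-1}$, and $v\sim w$ iff $v_2+w_2=L(v_1w_1)$. The map $(a,b)\mapsto L(ab)$ is symmetric and $\F_t$-bilinear, and for $a\neq 0$ the map $b\mapsto L(ab)$ is $t$-to-one onto $R_L$. Hence two vertices have exactly $t$ algebraic common neighbours if $v_1\neq w_1$, and none otherwise.

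Your line/dual-line structure then follows. For fixed $x$, the set of vertices algebraically adjacent to $x$ is an affine $\F_t$-subspace, so it contains the whole $\F_t$-line $M$ through $v$ and $w$ as soon as it contains $v$ and $w$.

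Even then you must treat the pairs for which $M$ meets the common neighbourhood $N$, at a vertex with $2v_2=L(v_1^2)$. These give $K_{1,t-1,t-1}$'s rather than $K_{t,t}$'s and violate your (ii). The paper shows there are at most $t^e\cdot t^e=tn=o(n^2)$ of them.
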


The description of the graphs we use to obtain this result was first given by the author of this note with the application of the multicolor Ramsey number in mind for $K_{2, t+1}$-free graphs when $t$ is a prime power. This is mentioned in the survey of Lazebnik and Wang \cite{LazebnikWang2026TriangularSystemsSurvey}. We note however that a close inspection of the graphs constructed by Lazebnik and Mubayi \cite{LazebnikMubayi2002RamseyGraphsHypergraphs} in 2002 for the purpose of obtaining bounds on this same multicolor Ramsey number is in fact isomorphic to our graph.

\section{Proof of \Cref{T: Main}}
 We first establish a precise upper bound on $ex(n, K_{t, t}, K_{2, t+1})$. The proof of the following proposition follows that of \cite{PohoataTidorYu2026K2tFreeGraphs} where the authors observe that $ex(n, K_{t, t}, K_{2, t+1}) = O(n^2)$. A simple additional step beyond their argument gives the following upper bound.

\begin{prop}\label{P: UpBound}
    Let $t$ and $n$ be positive integers with $2\leq t \leq n$. Then
    $$
    ex(n, K_{t, t}, K_{2, t+1})\leq \frac{\binom{n}{2}}{2\binom{t}{2}}
    $$
\end{prop}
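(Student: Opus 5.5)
The plan is a double-counting argument over unordered pairs of vertices. Let $G$ be a $K_{2,t+1}$-free graph on $n$ vertices. Identify each copy of $K_{t,t}$ in $G$ with an unordered pair $\{A,B\}$ of disjoint $t$-sets of vertices such that every vertex of $A$ is adjacent to every vertex of $B$. Say that a pair $\{x,y\}$ of distinct vertices is \emph{covered} by this copy if $\{x,y\}\subseteq A$ or $\{x,y\}\subseteq B$. Since $A$ and $B$ are disjoint, each copy covers exactly $2\binom{t}{2}$ distinct pairs; this is where $t\ge 2$ is used. It therefore suffices to show that every pair $\{x,y\}$ is covered by at most one copy. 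Summing over copies then gives $2\binom{t}{2}\cdot \#\{K_{t,t}\text{'s}\}\le \binom{n}{2}$, which is the bound in \Cref{P: UpBound}.

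The main step is to show that a covered pair determines its copy, using $K_{2,t+1}$-freeness twice. Suppose $\{x,y\}\subseteq A$ for some copy $\{A,B\}$. Then $B\subseteq N(x)\cap N(y)$, so $|N(x)\cap N(y)|\ge t$. Since $G$ is $K_{2,t+1}$-free, $|N(x)\cap N(y)|\le t$. Hence $B=N(x)\cap N(y)$, so $B$ is determined by $\{x,y\}$ alone.

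Next, pick any two distinct vertices $u,v\in B$ (possible since $t\ge 2$). The same argument gives $A=N(u)\cap N(v)$. Thus $A$ is also determined by $\{x,y\}$, because it depends only on $B$. So if $\{x,y\}$ is covered by two copies $\{A,B\}$ and $\{A',B'\}$, with the pair lying in $A$ and in $A'$ respectively after relabelling, then $B=B'$ and consequently $A=A'$. The two copies coincide.

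I expect the only real subtlety to be bookkeeping rather than mathematics. One must check that a copy of $K_{t,t}$ is being counted as the unordered bipartition $\{A,B\}$, and not with multiplicity from automorphisms or from different orderings of its sides. One must also check that a given pair cannot be covered once via the $A$-side and once via the $B$-side of distinct copies. The uniqueness argument above handles the latter, since it only uses which side contains the pair, and then relabels. With these points in place, the double count finishes the proof.
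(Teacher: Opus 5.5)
Your proof is correct and follows the paper's argument: both double count pairs lying on the same side of a copy, and both use $K_{2,t+1}$-freeness twice, first to force $B=N(x)\cap N(y)$ and then $A=N(u)\cap N(v)$ for $u,v\in B$. Your write-up is, if anything, more careful than the paper's, since you explicitly identify each copy with its unordered bipartition $\{A,B\}$ and you handle the relabelling of sides.
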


\begin{proof}
    Let $G$ be a $K_{2, t+1}$-free graph. Any two vertices $x$ and $y$ share a common neighborhood $N$ of at most $t$ elements, in which case the two vertices along with their common neighbors form a $K_{2, t}$ subgraph. We claim there is at most one $K_{t, t}$ which contains this copy of $K_{2, t}$ in $G$. Repeat this argument for any pair of vertices in $N$. This implies that there is at most one set $M$ with cardinality $t$ which contains $x$ and $y$ and for which the subgraph on $M$ and $N$ contains a $K_{t, t}$. 

    Counting over all pairs yields at most $\binom{n}{2}$ copies of $K_{t, t}$. However, this argument counts each $K_{t, t}$ precisely $2\binom{t}{2}$ times which is the number of ways to pick two vertices from the same side of a given $K_{t, t}$. Hence, 
    $$
    ex(n, K_{t, t}, K_{2, t+1})\leq \frac{\binom{n}{2}}{2\binom{t}{2}}.
    $$
\end{proof}

We now give an explicit construction which attains this upper bound (modulo lower order terms) when $t$ is any prime power and $n$ is any odd power of $t$. 

\noindent \textbf{The graph:} Let $t$ be a prime power, $q = t^e$ for any integer $e \geq 1$. Let $L$ be an $\F_t$-linearized polynomial in $\F_q[x]$ which has $\dim (\ker (L)) = 1$ and denote by $R_L$ the image space of $L$. We note that $L$ can be thought of as a linear transformation of rank $e - 1$ on the vector space $\F_t^e$. 
Since $L$ is $\F_t$-linear, we know that $R_L$ is closed under addition and that  $|R_L| = t^{e-1}$. 
Define the graph $\Gamma_{t, e}$ with vertex set $V = \F_{q}\times R_{L}$. Two distinct vertices $(v_1, v_2), (w_1, w_2) \in V$ are adjacent if and only if 
\begin{align}\label{adjeq}
v_2 + w_2 = L(v_1w_1).
\end{align}
Note that $|V| = q|R_L| = q^2/t$.

\noindent \textbf{Remark}: Although we work with simple graphs, the adjacency equation implies that a vertex can sometimes be adjacent to itself. We remove any such loops. We will say that any two (not necessarily distinct) vertices $(v_1, v_2)$ and $(w_1, w_2)$ are \textit{algebraically adjacent} if they satisfy (\ref{adjeq}).

\begin{lem}
    $\Gamma_{t, e}$ is $K_{2, t+1}$-free.
\end{lem}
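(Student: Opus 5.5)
The plan is to prove directly that any two distinct vertices of $\Gamma_{t,e}$ have at most $t$ common neighbours. This is exactly the statement that $\Gamma_{t,e}$ is $K_{2,t+1}$-free. It will be convenient to count \emph{algebraic} common neighbours, i.e.\ vertices algebraically adjacent to both. Every genuine common neighbour is also an algebraic one, since deleting loops only removes adjacencies. So an upper bound of $t$ on algebraic common neighbours is enough.

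Fix distinct vertices $(v_1,v_2)$ and $(w_1,w_2)$, and let $(x_1,x_2)$ be an algebraic common neighbour. Then
$$x_2 = L(v_1x_1) - v_2 = L(w_1x_1) - w_2.$$
Subtracting the two expressions and using $\F_t$-linearity of $L$ gives
$$L\bigl((v_1-w_1)x_1\bigr) = v_2 - w_2.$$
I split into two cases according to whether $v_1=w_1$.

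\textbf{Case $v_1 = w_1$.} The left side is $L(0)=0$, so we would need $v_2=w_2$. This contradicts distinctness, so there are no common neighbours at all.

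\textbf{Case $v_1 \ne w_1$.} Put $a=v_1-w_1\neq 0$ and $c = v_2-w_2$. The map $x_1\mapsto L(ax_1)$ is $\F_t$-linear on $\F_q$. Its kernel is $a^{-1}\ker(L)$, which has $\F_t$-dimension $1$ and hence exactly $t$ elements. The solution set of $L(ax_1)=c$ is therefore either empty or a coset of this kernel. In either case there are at most $t$ admissible values of $x_1$.

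For each admissible $x_1$, the coordinate $x_2$ is forced to equal $L(v_1x_1)-v_2$. This value does lie in $R_L$, because $R_L$ is an $\F_t$-subspace. So each admissible $x_1$ gives exactly one common neighbour, and the total number of common neighbours is at most $t$. No step here is a real obstacle. The only points needing care are the dimension count for the kernel of $x_1\mapsto L(ax_1)$, and making sure that removing loops cannot increase the count.
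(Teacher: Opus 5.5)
Your proof is correct and follows the paper's argument. Like the paper, you subtract the two adjacency equations to get $L\bigl((v_1-w_1)x_1\bigr)=v_2-w_2$, rule out $v_1=w_1$, and use the $t$-element kernel of $x_1\mapsto L(ax_1)$ to bound the admissible $x_1$ by $t$, each forcing $x_2$. The only difference is cosmetic: the paper handles $v_1=w_1$ via the uniqueness of a neighbour with prescribed first coordinate, whereas you read the contradiction off the subtracted equation directly.
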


\begin{proof}
    Observe that given any vertex $v = (v_1, v_2)$, and any $\alpha \in \F_{q}$, $\alpha \neq v_1$, the vertex $(v_1, v_2)$ has a unique neighbor with first coordinate $\alpha$. In particular, using (\ref{adjeq}), we have that $v$ is adjacent to $(\alpha, L(\alpha v_1) - v_2)$. 

    Let $v = (v_1, v_2)$ and $w = (w_1, w_2)$ be two vertices in $\Gamma$, we will show that they cannot have more than $t$ common neighbors. If $v_1 = w_1$, then $v$ and $w$ share no common neighbors because if they did, it would imply that this common neighbor is adjacent to two distinct vertices with same first coordinate, a contradiction. So we suppose that $v_1 \neq w_1$.

    Let $x = (x_1, x_2) \in V$ be a common neighbor of both $v$ and $w$. Then the equations
    \begin{align}
        x_2 + v_2 = L(x_1v_1) \\
        x_2 + w_2 = L(x_1w_1)
    \end{align}
    must be satisfied by $(x_1, x_2)$. Subtracting equation (3) from equation (2), we obtain that 
    \begin{align}\label{eq2}
        v_2 - w_2 = L(x_1(v_1 - w_1)).
    \end{align}
    Thus if $x$ is adjacent to $v$ and $w$, its first coordinate must be a solution to (\ref{eq2}). Since $v_1 \neq w_1$, then we know that the polynomial on the right hand side is not 0. Let $a = v_1 - w_1$ and $b = v_2 - w_2$, then $x_1$ satisfies
    $$
    L(a x_1) = b
    $$
    Since $b \in R_L$, then we know that there exists at least one solution. On the other hand, $L$ is $\F_t$-linear over $\F_{q}$ and has $t$ roots and so must in fact be a $t$-to-$1$ mapping. Therefore, there are exactly $t$ different values of $x_1$ which satisfy (\ref{eq2}), and so $v$ and $w$ have at most $t$ common neighbors. Thus $\Gamma_{t, e}$ is $K_{2, t+1}$-free.
\end{proof}

We now establish the following lemma which is the key ingredient for the proof of the main theorem.

\begin{lem}\label{L: Ktt}
    Let $(v_1, v_2), (w_1, w_2)$ be two non-adjacent vertices in $\Gamma_{t, e}$ with $v_1 \neq w_1$. Then there exist two sets of vertices $M$ and $N$ of cardinality $t$, with $(v_1, v_2), (w_1, w_2)\in M$, such that each vertex in $M$ is algebraically adjacent to each vertex in $N$.
\end{lem}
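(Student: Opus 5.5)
The plan is to take $N$ to be the set of common algebraic neighbours of $v=(v_1,v_2)$ and $w=(w_1,w_2)$, and $M$ to be the $\F_t$-affine line through $v$ and $w$ in $V$. Then I would check the adjacency equation (\ref{adjeq}) directly, using only the $\F_t$-linearity of $L$ and the relation (\ref{eq2}).

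First, construct $N$. Put $a=v_1-w_1\neq 0$ and $b=v_2-w_2\in R_L$. By the argument in the previous lemma, the equation $L(ax_1)=b$ has exactly $t$ solutions. Since $\ker L=\F_t c$ for some $c\neq 0$, these solutions have the form $x_1=x_0+\lambda c/a$ with $\lambda\in\F_t$, and they are pairwise distinct. For each such $x_1$, set $x_2=L(x_1v_1)-v_2$. This lies in $R_L$, because $R_L$ is an additive group. Subtracting the two equations shows that $x_2=L(x_1w_1)-w_2$ as well. Thus each $(x_1,x_2)$ is algebraically adjacent to both $v$ and $w$. The resulting $t$ vertices have distinct first coordinates, so $|N|=t$.

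Next, define
$$M=\{(v_1+\mu(w_1-v_1),\ v_2+\mu(w_2-v_2)) : \mu\in\F_t\}.$$
The second coordinates lie in $R_L$, since $R_L$ is an $\F_t$-subspace, being the image of an $\F_t$-linear map. The first coordinates are distinct because $w_1\neq v_1$, so $|M|=t$, and $\mu=0,1$ give $v,w\in M$.

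The key verification is that every $y\in M$ is algebraically adjacent to every $x\in N$. For $y$ with parameter $\mu$,
$$y_2+x_2=\mu(w_2-v_2)+L(x_1v_1).$$
On the other side, $\F_t$-linearity of $L$ gives
$$L(x_1y_1)=L(x_1v_1)+\mu L(x_1(w_1-v_1)).$$
By (\ref{eq2}), $L(x_1(w_1-v_1))=w_2-v_2$, so the two sides agree.

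I do not expect a real obstacle here. The only step needing care is choosing $M$: the affine line is the right choice precisely because $\mu\in\F_t$ can be pulled out of $L$. Finally, I would note where non-adjacency is used. If $v$ were in $N$, then $v$ would be algebraically adjacent to $w\neq v$, hence adjacent to $w$, which is excluded. So $v,w\notin N$, and the configuration is a genuine candidate for a $K_{t,t}$ containing $v,w$ on one side.
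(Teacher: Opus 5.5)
Your proposal is correct and is essentially the paper's proof. As in the paper, you take $N$ to be the $t$ common algebraic neighbours given by the solutions of $L(x_1(v_1-w_1))=v_2-w_2$ and $M$ to be the $\F_t$-affine line through $v$ and $w$, then verify adjacency by pulling the $\F_t$-scalar out of $L$ and applying (\ref{eq2}); the differences are cosmetic (you parametrize from $v$ rather than $w$), plus your welcome explicit check that the second coordinates lie in $R_L$ because it is an $\F_t$-subspace.
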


\begin{proof}
    Recall that the set of common neighbors $(x_1, x_2)$ of $(v_1, v_2)$ and $(w_1, w_2)$ is given by 
    $$
    N = \{ (x_1, L(w_1x_1) - w_2):v_2 - w_2 = L(x_1(v_1 - w_1)) \}
    $$
    which has exactly $t$ elements. We also highlight that the set $N$ forms an $\F_t$-line in $\F_{q} \times R_L$ because the set of solutions $x_1$ to $L(x_1(v_1 - w_1) = v_2 - w_2$ forms an $\F_t$-line in $\F_q$.
    
    Since $v_1 \neq w_1$, then we know that $N$ is not empty. Furthermore, since $(v_1, v_2)$ is not adjacent to $(w_1, w_2)$, then $(v_1, v_2), (w_1, w_2) \not\in N$. Hence, $N$, together with the vertices $(v_1, v_2), (w_1, w_2)$ forms a $K_{2, t}$ in $\Gamma_{t, e}$. Now, let $M$ be the $\F_t$-line in $\F_q \times R_L$ which contains the points $(v_1, v_2), (w_1, w_2)$. In particular, 
    $$
    M = \{ (w_1 + m(v_1 - w_1), w_2  + m(v_2 - w_2)): m \in \F_t \}.
    $$
    We claim that all vertices in $M$ are algebraically adjacent to all vertices in $N$. We have $(w_1 + m(v_1 - w_1), w_2  + m(v_2 - w_2)) \in M$ is adjacent to $(x_1, L(w_1x_1) - w_2) \in N$ if 
    $$
    w_2 + m(v_2 - w_2) + L(w_1x_1) - w_2 = L(x_1(w_1 + m(v_1 - w_1))).
    $$
    Now 
    $$
    m(v_2 - w_2)  = mL(x_1(v_1 - w_1)) = L(mx_1(v_1 - w_1)).
    $$
    Hence
    $$
    w_2 + m(v_2 - w_2) + L(w_1x_1) - w_2  = L(mx_1(v_1 - w_1)) + L(w_1x_1) = L(x_1(w_1 + m(v_1 - w_1)))
    $$
    Therefore every vertex of $M$ is algebraically adjacent to every vertex of $N$.

    Since $\Gamma_{t, e}$ is $K_{2, t+1}$-free, the set $N$ is uniquely determined. Likewise, choosing any two distinct vertices in $N$ uniquely determines $M$ for the same reason.
\end{proof}

In the lemma above, we assumed that the two given vertices are not adjacent. Since $M$ and $N$ are $\F_t$-lines, it follows that $M \neq N$ and hence we  either have that $|M \cap N| = 1$ or $|M \cap N| = 0$.  When $|M\cap N| = 0$, the subgraph with parts $M$ and $N$ contains a $K_{t, t}$. On the other hand when $|M \cap N| = 1$, the subgraph with parts $M$ and $N$ contains the tripartite complete graph $K_{1, t-1, t-1}$. 

\begin{proof}[Proof of \Cref{T: Main}]
\Cref{L: Ktt} implies that any two distinct non-adjacent vertices $(v_1, v_2)$ and $(w_1, w_2)$ with $v_1 \neq w_1$ uniquely define either a $K_{t, t}$ or a $K_{1, t-1, t-1}$. Indeed, once a valid pair of vertices is chosen and fixed, the corresponding sets $M$ and $N$ are obtained deterministically. The number of such distinct pairs of vertices is at least $t^{2e - 1}(t^{2e - 1} - t^e - t^{e-1})/2$ since the degree of each vertex is at most $t^e$ and we also exclude all vertices whose first coordinate is $v_1$. Each distinct $K_{t, t}$ and $K_{1, t-1, t-1}$ is counted at most $2\binom{t}{2}$ times. As $n = t^{2e - 1}$, the total number of distinct $K_{t, t}$'s and $K_{1, t-1, t-1}$'s in $\Gamma_{t, e}$ which are included in the count is at least 
$$
\frac{t^{2e - 1}(t^{2e - 1} - t^e - t^{e-1})}{2t(t-1)} = (1 + o(1))\frac{n^2}{2t(t-1)}
$$
We now show that the number of distinct $K_{1, t-1, t-1}$'s which are counted in this way is negligible compared to the total count.

Observe that if $|M \cap N| = 1$, then the intersection $M\cap N$ is a vertex $v = (v_1, v_2)$ which is algebraically adjacent to itself, and therefore satisfies $2v_2 = L(v_1^2)$. This implies that the symmetric difference $M \Delta N$ is entirely contained in the neighborhood of $v$. Hence, to upper bound the total number of distinct $K_{1, t-1, t-1}$ which appear in our count, we may simply count pairs $\{v, w\}$ where $v$ is algebraically adjacent to itself and $w$ is a neighbor of $v$. Indeed, for any such bad configuration and any $w\in M\triangle N$, the $\F_t$-line containing $v$ and $w$
is fixed, while the other $\F_t$-line is the set of algebraic common neighbors of $v$
and $w$, which is uniquely determined by the same common-neighborhood
calculation as in \Cref{L: Ktt}.

The number of vertices $v = (v_1, v_2)$ which satisfy $2v_2 = L(v_1^2)$ is $|\F_q| = t^e$. Thus the total number of such pairs is at most $t^e\cdot t^e = tn$. Therefore the number of $K_{t, t}$'s is at least $(1+ o(1))\frac{n^2}{2t(t-1)}$, finishing the proof.
\end{proof}

\section{Concluding Remarks}

The exact lower bound obtained in \cite{PohoataTidorYu2026K2tFreeGraphs} is of the form 
$$
(1 - o(1))\frac{n^2}{4t^2t!} \leq ex(n, K_{t, t}, K_{2, t+1}) .
$$
Our results in this paper, together with the monotonicity of $ex(n, H, F)$, imply that for all prime powers $t$, we have 
$$
 (1 - o(1))\frac{n^2}{2t^5(t-1)} \leq ex(n, K_{t, t}, K_{2, t+1}). 
$$
This improves upon the former bound when $t > 5$ is a prime power.

We end by conjecturing that the upper bound from \Cref{P: UpBound} gives the true growth of $ex(n, K_{t, t}, K_{2, t+1})$ modulo the lower order terms.

\begin{conj}
    $$
    ex(n, K_{t, t}, K_{2, t+1}) = (1 + o(1))\frac{n^2}{2t(t-1)}.
    $$
\end{conj}

\bibliographystyle{plainurl}
\bibliography{references.bib}

\end{document}